\documentclass[a4paper,12pt]{article}
\usepackage{amssymb,amsthm,amsmath,latexsym}

\newtheorem{thm}{Theorem}[section]
\newtheorem{pro}[thm]{Proposition}
\newtheorem{lem}[thm]{Lemma}

\def\l{\langle}
\def\r{\rangle}

\frenchspacing
\date{}

\title{\normalsize{\bf AN ENGEL CONDITION FOR ORDERABLE GROUPS}}

\author{\small{\textsc{Pavel Shumyatsky\begin{footnote}{This work was carried out during the first author's visit to the University of Salerno. He would like to thank the Department of Mathematics for hospitality and GNSAGA and CNPq - Brazil for support.}\end{footnote}}}\\
\small{Department of Mathematics, University of Brasilia}\\
\small{Brasilia-DF, 70910-900 Brazil}\\
\small{E-mail: pavel@unb.br}\\
[10pt]
\small{\textsc{Antonio Tortora} and \textsc{Maria Tota}}\\
\small{Dipartimento di Matematica, Universit\`a di Salerno}\\
\small{Via Giovanni Paolo II, 132 - 84084 - Fisciano (SA), Italy}\\
\small{E-mail: antortora@unisa.it, mtota@unisa.it}}

\begin{document}
\maketitle

\begin{abstract} Let $m,n$ be positive integers, $v$ a multilinear commutator word and $w=v^m$. We prove that if $G$ is an orderable group in which all $w$-values are $n$-Engel, then the verbal subgroup $v(G)$ is locally nilpotent. We also show that in the particular case where $v=x$ the group $G$ is nilpotent (rather than merely locally nilpotent).\\

\noindent{\bf 2010 Mathematics Subject Classification:} 20F45, 20F60\\
{\bf Keywords:} Engel elements, ordered groups
\end{abstract}

\section{Introduction}

An element $x$ of a group $G$ is called a (left) Engel element if for any $g\in G$ there exists $n=n(x,g)\geq 1$ such that $[g,_n x]=1$; as usual, the commutator $[g,_n x]$ is defined recursively by the rule
$$[g,_n x]=[[g,_{n-1} x],x]$$
assuming $[g,_0 x]=g$. If $n$ can be chosen independently of $g$, then $x$ is a {\em (left) $n$-Engel element}. A group $G$ is called an $n$-Engel group if all elements of $G$ are $n$-Engel. It is a long-standing problem whether any $n$-Engel group is locally nilpotent. Following Zelmanov's solution of the restricted Burnside problem \cite{ze1,ze2}, Wilson proved that this is true if $G$ is residually finite \cite{w}. In \cite{kr} Kim and Rhemtulla extended Wilson's theorem by showing that any locally graded $n$-Engel group is locally nilpotent. Recall that a group is locally graded if every nontrivial finitely generated subgroup has a proper subgroup of finite index. In particular, every residually finite group is locally graded. Kim and Rhemtulla also proved that any orderable $n$-Engel group is nilpotent (\cite{kr2}, see also \cite{LM}). A group $G$ is called {\em orderable} if there exists a full order relation $\leq$ on the set $G$ such that $x\leq y$ implies $axb\leq ayb$ for all $a,b,x,y\in G$, i.e. the order on $G$ is compatible with the product of $G$. Recently groups with $n$-Engel word-values were considered \cite{BSTT,STT}.

Let $w$ be a group-word and $G$ be a group. The verbal subgroup $w(G)$ of $G$ is the subgroup generated by all $w$-values in $G$. The words considered in this paper are {\it multilinear commutators}, also known under the name of outer commutator words. These are words that have a form of a multilinear Lie monomial, i.e., they are constructed by nesting commutators but using always different variables. For example the word
$$[[x_1,x_2],[y_1,y_2,y_3],z]$$
is a multilinear commutator  while the Engel word
$$[x,y,y,y]$$
is not. It was proved in \cite{STT} that, given positive integers $m,n$ and a multilinear commutator word $v$, if $G$ is a locally graded group in which all values of the word $w=v^m$ are $n$-Engel, then the verbal subgroup $w(G)$ is locally nilpotent. The purpose of the present paper is to prove a related result for orderable groups.

\begin{thm}\label{main}
Let $m,n$ be positive integers, $v$ a multilinear commutator word and $w=v^m$. If $G$ is an orderable group in which all $w$-values are $n$-Engel, then the verbal subgroup $v(G)$ is locally nilpotent.
\end{thm}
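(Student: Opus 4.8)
The proof follows the general line of \cite{STT}, with the Kim--Rhemtulla theorem that an orderable $n$-Engel group is nilpotent (\cite{kr2}) taking the place occupied by Zelmanov's theorem and its consequences (\cite{w,kr}) in the residually finite or locally graded setting; the stronger conclusion here --- $v(G)$ rather than merely $w(G)=v^{m}(G)$ --- will come from the fact that orderable groups have unique roots. First I would reduce to the finitely generated case: local nilpotency of $v(G)$ means that every finitely generated subgroup of $v(G)$ is nilpotent, and any such subgroup is contained in $v(K)$ for a suitable finitely generated subgroup $K\le G$ (a finite generating set involves only finitely many $v$-values, each of which involves only finitely many elements of $G$); since $K$ is again orderable and inherits the hypothesis on $w$-values, one may assume $G$ finitely generated.

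Next I would record the arithmetic of the order that drives everything: an orderable group is torsion-free and has unique roots, i.e. $a^{k}=b^{k}$ with $k\ge 1$ forces $a=b$ (in a bi-ordered group $a>b$ implies $a^{k}>b^{k}$), so that $C_{G}(a^{k})=C_{G}(a)$ for all $k\ge1$; moreover conjugation by any element of $G$ is order-preserving, conjugates of $n$-Engel elements are $n$-Engel, and, since $v$ is a multilinear commutator word, the set of $v$-values --- hence also the set of $w$-values --- of any group is closed under conjugation.

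The key step is the passage from the hypothesis on $w$-values to an Engel condition on $v$-values: I claim that every $v$-value $x$ of $G$ is a left $\mathbf{k}$-Engel element for some $\mathbf{k}=\mathbf{k}(m,n)$. Fixing $g\in G$, one starts from $[g,{}_{n}x^{m}]=1$ and expands $[g,x^{m}]=[g,x]^{x^{m-1}}\cdots[g,x]^{x}[g,x]$; whenever a term $h$ in the Engel sequence for $x^{m}$ satisfies $[h,x^{m}]=1$ one deduces $h\in C_{G}(x^{m})=C_{G}(x)$, that is $[h,x]=1$, and feeding this back through the expansion, together with order-preservation of conjugation by $x$, forces the Engel sequence $g,[g,x],[g,x,x],\dots$ down to $1$ in a bounded number of steps. (In the abelianized model this is simply the statement that for an order-preserving automorphism $\phi$ of an ordered abelian group, $(\phi^{m}-1)^{n}b=0$ implies $(\phi-1)^{n}b=0$.) When $v=x_{1}$ this already finishes the proof, since then every element of $G$ is a left $\mathbf{k}$-Engel element, so $G$ is a $\mathbf{k}$-Engel orderable group and hence nilpotent by \cite{kr2}; this is the last assertion of the theorem.

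For a general multilinear commutator word $v$, it remains to prove that an orderable group in which every $v$-value is a left $\mathbf{k}$-Engel element has $v(G)$ locally nilpotent --- in effect, the case $m=1$ of the theorem with $n$ replaced by $\mathbf{k}$. For this I would reproduce the argument of \cite{STT} in the orderable setting: using that $v$ is a multilinear commutator word, reduce to controlling the normal closures $\langle x\rangle^{G}$ of $v$-values; obtain nilpotency, with bounded class, of the relevant finitely generated subgroups from \cite{kr2} (an $n$-Engel element acts nilpotently, with bounded class, on a nilpotent subgroup it normalizes, so that adjoining such an element preserves nilpotency); and assemble the resulting normal locally nilpotent subgroups by the Hirsch--Plotkin theorem, $v(G)$ being generated by the conjugation-closed set of its $v$-values. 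The main obstacle is the third step above: converting the Engel condition on $w=v^{m}$ into one on $v$ is precisely where orderability, through uniqueness of roots and order-preservation of conjugation, is indispensable, and where the commutator bookkeeping must be done with care. A secondary difficulty is carrying the \cite{STT}-type machinery through in the orderable setting --- in particular, arranging the normalizations needed for the ``$n$-Engel element acts nilpotently'' principle and keeping the nilpotency-class bounds under control.
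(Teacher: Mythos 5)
Your central step --- that in an orderable group an element $x$ with $x^{m}$ left $n$-Engel is itself left $\mathbf{k}(m,n)$-Engel --- is a genuine gap: it is asserted, not proved, and the sketch offered does not amount to an argument. The expansion $[g,x^{m}]=[g,x]^{x^{m-1}}\cdots[g,x]^{x}[g,x]$ only relates $[g,x^m]$ to conjugates of $[g,x]$ modulo a mass of higher commutators, and ``feeding this back'' through $n$ iterations has no evident inductive structure; the identity $C_G(x^m)=C_G(x)$ only helps once you already know some term of the Engel sequence is centralized by $x^m$, which is exactly what is not given. The abelian model you cite ($(\phi^m-1)^n b=0\Rightarrow(\phi-1)^n b=0$ via injectivity of $1+\phi+\cdots+\phi^{m-1}$) is correct but does not transfer, because the Engel condition is not linear. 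A telling consistency check: if your claim held, then for $v=x$ every element of $G$ would be $\mathbf{k}$-Engel and Kim--Rhemtulla \cite{kr2} would give nilpotency of $G$ at once --- yet the paper proves exactly that statement (Theorem \ref{main2}) only by first establishing local nilpotency via Zelmanov-based machinery and then invoking the Burns--Medvedev bound (Theorem \ref{BM}) together with residual finiteness of finitely generated torsion-free nilpotent groups. Your route would trivialize their hardest result, which strongly suggests the key claim is not obtainable by elementary commutator bookkeeping.

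The paper's actual proof never converts the Engel condition from $w$-values to $v$-values elementwise. It reduces to $\delta_k$-values (Lemma \ref{H^y}), and for a finitely generated $V\le G^{(k)}$ works with $W=\langle v_1^m,\dots,v_d^m\rangle$ inside $H=\langle v_1^m,\dots,v_{d-1}^m,v_d\rangle$, showing by induction on $d$ that soluble quotients of $H$ are polycyclic, that the derived subgroups $H^{(i)}$ are finitely generated (Lemmas \ref{G^i}, \ref{fg}, \ref{HK}), and then using convex subgroups (which are normal by Lemma \ref{cn}) to produce a residually soluble, hence residually finite, quotient $H/R$ to which Proposition \ref{gamma} --- a consequence of Zelmanov's theorem, not of \cite{kr2} --- applies; a convex-jump argument forces $R=1$. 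Only at the very end is the passage from $\langle v_1^m,\dots,v_d^m\rangle$ to $\langle v_1,\dots,v_d\rangle$ made, via Lemma \ref{Zi} ($Z_i(N)=N\cap Z_i(G)$ for $N$ generated by powers of the generators), which is where unique roots and relative convexity of the upper central series enter. Your secondary plan of ``reproducing \cite{STT} in the orderable setting'' also quietly needs this Zelmanov-type input (residually finite Engel groups being locally nilpotent); it cannot be replaced by an unqualified principle that an $n$-Engel element acts nilpotently of bounded class on a nilpotent subgroup it normalizes. As written, the proposal does not establish the theorem.
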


We emphasize that unlike the situation with the locally graded groups where the result was that $w(G)$ is locally nilpotent, Theorem \ref{main} states that so is $v(G)$. Of course, in general $w(G)\leq v(G)$. There still remains the question whether, under assumptions of Theorem \ref{main}, the subgroup $v(G)$ is actually nilpotent. This seems to be a complicated problem. We were able to prove the nilpotency of $v(G)=G$ only in the particular case where $v=x$.

\begin{thm}\label{main2}
Let $m,n$ be positive integers and $G$ be an orderable group. If $x^m$ is $n$-Engel, for all $x\in G$, then $G$ is nilpotent.
\end{thm}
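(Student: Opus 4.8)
The plan is first to reduce the statement to a uniform Engel bound and then to invoke the cited result of Kim and Rhemtulla. Applying Theorem \ref{main} with $v=x$ (so that $w=v^m=x^m$ and $v(G)=G$) shows that $G$ is locally nilpotent, and $G$ is torsion-free because it is orderable. It therefore suffices to prove that, under the present hypotheses, $G$ is a $k$-Engel group for some $k=k(m,n)$ depending only on $m$ and $n$; the nilpotency of $G$ will then be exactly the theorem of Kim and Rhemtulla \cite{kr2} that orderable $n$-Engel groups are nilpotent. Since being $k$-Engel is a condition on pairs of elements, I would fix $g,x\in G$, put $N=\langle g,x\rangle$ --- a finitely generated torsion-free nilpotent group --- and aim to bound the Engel degree of $N$ by a $k$ that does not depend on the pair; note that for every $z\in N$ the element $z^m$ is left $n$-Engel in $N$ simply because it is left $n$-Engel in the ambient group $G$.

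The key idea is to pass to the Mal'cev Lie algebra $L$ of $N$ over $\mathbb{Q}$, a finite-dimensional nilpotent Lie algebra generated as a Lie algebra by $\log g$ and $\log x$, and to use the well-known dictionary between a torsion-free nilpotent group and its Mal'cev algebra: a left $n$-Engel element $a\in N$ gives rise to an element $\log a\in L$ with $\operatorname{ad}\log a$ nilpotent of index bounded by a function of $n$ alone. Applying this to $a=z^m$ and using $\log(z^m)=m\log z$ together with the fact that $L$ is a $\mathbb{Q}$-algebra, the scalar $m$ cancels and one gets $(\operatorname{ad}\log z)^{k_0}=0$ for every $z\in N$, where $k_0=k_0(n)$. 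Now a Mal'cev basis of $N$ gives a polynomial coordinate bijection $\mathbb{Q}^d\to L$ under which the integer points map onto $\log N$; hence the polynomial map $y\mapsto(\operatorname{ad} y)^{k_0}$ on $L$ vanishes on a Zariski-dense set and so vanishes identically, i.e. $L$ is a $k_0$-Engel Lie algebra. By Zelmanov's theorem on Engel Lie algebras \cite{ze1,ze2}, a finitely generated $k_0$-Engel Lie algebra over a field of characteristic zero is nilpotent of class bounded in terms of $k_0$ and the number of generators; thus $L$, being $2$-generated, has class at most $c=c(n)$, and through the Mal'cev correspondence $N$ is nilpotent of class at most $c$, so $[g,_c x]=1$. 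As $c$ is independent of $(g,x)$, the group $G$ is $c$-Engel, and \cite{kr2} yields that $G$ is nilpotent. (One may also dispense with the explicit appeal to Zelmanov's theorem by running the group--Lie dictionary in the opposite direction to conclude directly that $N$, and hence $G$, is $k$-Engel for a uniform $k$.)

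The step I expect to be the main obstacle is securing the \emph{uniformity} in the correspondence between the Engel degree of a torsion-free nilpotent group and the ad-nilpotency index in its Mal'cev algebra: the Baker--Campbell--Hausdorff expansion relating $[h,_i a]$ to iterated brackets of $\log h$ and $\log a$ carries lower-order correction terms, and one has to check that the function linking the two invariants depends on $n$ alone and not on the nilpotency class of $N$ --- which matters because the subgroups $\langle g,x\rangle$ of $G$ may have unbounded class. Once this, the Zariski-density argument, and the (by now standard) use of Zelmanov's Lie-theoretic Engel theorem are in place, the rest is routine and the conclusion follows from Kim and Rhemtulla. A more hands-on alternative, avoiding the Lie algebra, would be an induction on the nilpotency class of $N=\langle g,x\rangle$ --- reducing modulo the last nontrivial, and hence isolated, term of the lower central series and comparing $[g,_n x^m]$ with $[g,_n x]$ by commutator collection --- but the difficulty there is identical: preventing the Engel bound from growing with the class.
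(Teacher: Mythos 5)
Your route is genuinely different from the paper's, and as written it has a gap at exactly the point you flag yourself. The paper's proof is much more elementary: after invoking Theorem \ref{main} to get local nilpotency, it takes a finitely generated (hence f.g.\ torsion-free nilpotent) subgroup $H$, picks a prime $p$ coprime to $em$ where $c=c(n),e=e(n)$ are the Burns--Medvedev constants (Theorem \ref{BM}), observes that in a finite quotient $P$ of $H$ of $p$-power order every element is an $m$-th power (so $P$ is $n$-Engel and $\gamma_c(P)=1$ since $p\nmid e$), and concludes $\gamma_c(H)=1$ from Gruenberg's theorem that $H$ is a residually finite $p$-group. No Lie algebras, no Kim--Rhemtulla. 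Your plan instead concentrates all the difficulty into the assertion that a left $n$-Engel element $a$ of a f.g.\ torsion-free nilpotent group $N$ satisfies $(\operatorname{ad}\log a)^{k_0(n)}=0$ with $k_0$ independent of the class of $N$; you explicitly say this "has to be checked" and do not check it. Since the subgroups $\langle g,x\rangle$ may have unbounded class, a bound that degrades with the class would make the whole argument collapse, so this is not a routine verification one can wave at --- it is the theorem.

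That said, the missing step is true and your approach can be completed. The condition $[h,{}_n a]=1$ for all $h\in N$ passes to the Mal'cev completion $\widehat N$ by Zariski density (it is polynomial in Mal'cev coordinates), and replacing $h$ by $h^t$ and extracting the coefficient of $t$ in the BCH expansion shows that the part of $\log[h,{}_n a]$ linear in $X=\log h$ is exactly $(e^{-\operatorname{ad}A}-1)^nX$ with $A=\log a$; the higher BCH corrections all have degree $\ge 2$ in $X$ and so do not interfere. Hence $(e^{-\operatorname{ad}A}-1)^n=0$, and since $e^{-\operatorname{ad}A}-1=-\operatorname{ad}A\cdot u$ with $u$ unipotent and commuting with $\operatorname{ad}A$, one gets $(\operatorname{ad}A)^n=0$ --- so $k_0=n$, independent of the class. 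From there your cancellation of $m$ over $\mathbb{Q}$, the Zariski-density step, and Zelmanov's characteristic-zero Engel theorem (note: that is his 1988 paper on Engel Lie algebras, not the restricted Burnside papers \cite{ze1,ze2} you cite; it gives nilpotency of class bounded by $n$ alone, so you do not even need Kim--Rhemtulla at the end) do finish the proof. So: a workable and arguably more conceptual alternative, but substantially heavier machinery than the paper needs, and incomplete as submitted because the one nontrivial lemma is asserted rather than proved.
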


In Section 2 we will provide some basic facts on orderable groups while, in Section 3, we will prove our main results. Ultimately, the proofs are based on Zelmanov's solution of the restricted Burnside problem.

\section{On orderable groups}

Most of our notation is standard. In particular, we write $G=\langle x_1,\dots,x_d\rangle$ to mean that the group $G$ is generated by elements $x_1,\dots,x_d$.  As we have already mentioned a group $G$ is orderable if there exists a full order relation $\leq$ on the set $G$ such that $x\leq y$ implies $axb\leq ayb$ for all $a,b,x,y\in G$. If $G$ is a group with a fixed full order $\leq$ compatible with the product, then $(G,\leq)$ is called an {\it ordered} group. For example an infinite cyclic group is orderable. More generally, any torsion-free nilpotent group is orderable \cite[Theorems 1.3.3, 2.2.4]{BMR}. It is also easy to see that any orderable group is torsion-free.

The class of orderable groups is closed under taking subgroups but a quotient of an orderable group is not necessarily orderable \cite[Section 2.1]{BMR}. A subgroup $C$ of an ordered group $(G,\leq)$ is called {\it convex} if $x\in C$ whenever $1\leq x\leq c$ for some $c\in C$. Obviously $\{1\}$ and $G$ are convex subgroups of $G$; and, if $C$ is a convex subgroup, then every conjugate of $C$ is convex. It is also clear that all convex subgroups of an ordered group form, by inclusion, a totally ordered set, which is closed under intersection and union. If $C$ and $D$ are convex subgroups of an ordered group $G$, with $C<D$, and there is not a convex subgroup $H$ of $G$ such that $C<H<D$, we say that the pair $(C,D)$ is a {\it convex jump} in $G$.

A relatively convex subgroup of an orderable group $G$ is a subgroup convex under some order on $G$. Relatively convex subgroups play important roles: in fact, the quotient $G/N$ of an orderable group $G$ is orderable if and only if $N$ is a normal relatively convex subgroup of $G$. If $G$ is an orderable group, then every term $Z_i(G)$ of the upper central series of $G$ is relatively convex \cite[Theorem 2.2.4]{BMR}. Thus $G/Z_i(G)$ is orderable for all $i\geq 1$. This will be used in the proof of the following lemma.

\begin{lem}\label{Zi}
Let $G=\langle x_1,\dots,x_d\rangle$ be an orderable group and $m_1,\ldots,m_d$ be integers. If $N=\langle x_1^{m_1},\dots,x_d^{m_d}\rangle$, then $N\cap Z_i(G)=Z_i(N)$ for all $i\geq 1$. In particular, if $N$ is nilpotent of class $c$, then so is $G$.
\end{lem}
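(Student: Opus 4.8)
The plan is to prove $N\cap Z_i(G)=Z_i(N)$ by induction on $i$, the heart of the matter being the case $i=1$. First I would record that an orderable group has unique $k$-th roots for every nonzero integer $k$: in a compatible order, $x<y$ implies $x^k<y^k$ for $k\geq 1$ (multiply the inequality alternately on the left and on the right by $x$ and by $y$), so $x^k=y^k$ forces $x=y$, and the case $k<0$ follows by passing to inverses. A consequence is that centralizers in an orderable group $G$ are isolated: if $[a,b^k]=1$ with $k\neq 0$, then $(a^{-1}ba)^k=a^{-1}b^k a=b^k$, whence $a^{-1}ba=b$, i.e. $[a,b]=1$. Now for $i=1$ the inclusion $N\cap Z(G)\subseteq Z(N)$ is trivial, and conversely if $z\in Z(N)$ then $z$ commutes with each generator $x_j^{m_j}$ of $N$, so by the isolation property (here each $m_j\neq 0$, as is implicit in the statement) $z$ commutes with each $x_j$, hence with $G=\langle x_1,\dots,x_d\rangle$; thus $Z(N)=N\cap Z(G)$.

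For the inductive step, assume $i\geq 2$ and $N\cap Z_{i-1}(G)=Z_{i-1}(N)$. Since $Z_{i-1}(G)$ is relatively convex, $\bar G:=G/Z_{i-1}(G)$ is orderable; write $\bar N$ for the image of $N$, so that $\bar G=\langle\bar x_1,\dots,\bar x_d\rangle$ and $\bar N=\langle\bar x_1^{m_1},\dots,\bar x_d^{m_d}\rangle$. The quotient map restricted to $N$ has kernel $N\cap Z_{i-1}(G)=Z_{i-1}(N)$, hence identifies $N/Z_{i-1}(N)$ with $\bar N$; under this identification $Z_i(N)/Z_{i-1}(N)$ corresponds to $Z(\bar N)$, while the preimage in $N$ of $\bar N\cap Z(\bar G)$ is precisely $N\cap Z_i(G)$ (because the preimage of $Z(\bar G)$ in $G$ is $Z_i(G)$). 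Applying the already-established case $i=1$ to the pair $(\bar G,\bar N)$ gives $Z(\bar N)=\bar N\cap Z(\bar G)$, and pulling this equality back along the quotient map yields $Z_i(N)=N\cap Z_i(G)$, which closes the induction.

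Finally, if $N$ is nilpotent of class $c$, then $N=Z_c(N)=N\cap Z_c(G)$, so $N\leq Z_c(G)$ and in particular $x_j^{m_j}\in Z_c(G)$ for every $j$. The quotient $G/Z_c(G)$ is orderable, hence torsion-free, so $x_j^{m_j}\in Z_c(G)$ with $m_j\neq 0$ forces $x_j\in Z_c(G)$; therefore $G=\langle x_1,\dots,x_d\rangle=Z_c(G)$ is nilpotent of class at most $c$ (and of class exactly $c$, since it contains $N$). The one point requiring care is the bookkeeping in the inductive step: it is precisely the inductive hypothesis $Z_{i-1}(N)=N\cap Z_{i-1}(G)$ that lets one realize $N/Z_{i-1}(N)$ as a subgroup of the orderable group $\bar G$ and so invoke the $i=1$ case; everything beyond that is routine.
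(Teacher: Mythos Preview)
Your proof is correct and follows essentially the same route as the paper's: the same induction on $i$, the same passage to the orderable quotient $G/Z_{i-1}(G)$ via relative convexity of the upper central terms, and the same torsion-freeness argument for the final claim. The only difference is that you prove the isolation fact $[a,b^k]=1\Rightarrow[a,b]=1$ from scratch via unique extraction of roots, whereas the paper simply cites it from \cite[Lemma~2.5.1(i)]{BMR}; you also make explicit the standing assumption $m_j\neq 0$, which the paper (and its reference) leave implicit.
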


\begin{proof}
First, we remind the reader that for any $x,y\in G$, if $[x,y^{m_j}]=1$, then $x$ and $y$ commute \cite[Lemma 2.5.1 $(i)$]{BMR}. Let us prove that $N\cap Z_i(G)=Z_i(N)$. Clearly, it is enough to prove that $Z_i(N)\leq N\cap Z_i(G)$. We use induction on $i$. Let $i=1$ and $x\in Z(N)$. Since $[x,x_j^{m_j}]=1$ for any $j=1,\ldots, d$, the previous remark yields $[x,x_j]=1$ and so $x\in N\cap Z(G)$. Now assume that $i\geq 2$ and $Z_{i-1}(N)= N\cap Z_{i-1}(G)$. Since $Z_{i-1}(G)$ is relatively convex, we can pass to the quotient $G/Z_{i-1}(G)$. The image of $Z_{i}(N)$ in the quotient is precisely the center of the image of $N$. Therefore the above argument shows that $Z_i(N)= N\cap Z_i(G)$, as required.

Finally, if $N$ is nilpotent of class $c$ then $N\leq Z_c(G)$ and therefore $G=Z_c(G)$. In fact, if $Z_c(G)$ is a proper subgroup of $G$, then $G/Z_c(G)$ is an orderable group with some nontrivial elements of finite order. Since orderable groups are torsion-free, we obtain a contradiction.
\end{proof}

Orders on a group $G$ in which $\{1\}$ and $G$ are the only convex subgroups are very well known. By a result of H${\rm \ddot{o}}$lder \cite[Theorem\ 1.3.4]{BMR}, a group $G$ with such an order is order-isomorphic to a subgroup of the additive group of the real numbers under the natural order. This implies that, if $(C,D)$ is a convex jump of an ordered group, then $C$ is normal in $D$ and $D/C$ is abelian \cite[Lemma 1.3.6]{BMR}.

\section{The main results}

We say that a subset $S$ of a group $G$ is {\it commutator-closed} if $[x,y]\in S$ whenever $x,y\in S$.

\begin{lem}[see Corollary 5 of \cite{BSTT}]\label{G^i}
Let $m\geq 1$ and $G$  a group generated by a  normal commutator-closed set $S$ such that $x^m$ is Engel for all $x\in S$. If $G$ is finitely generated, then each term of the derived series of $G$ is finitely generated as well.
\end{lem}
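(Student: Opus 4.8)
This coincides with Corollary~5 of \cite{BSTT}; I indicate the strategy behind it. The plan is to reduce the statement to the single assertion that, under the hypotheses of the lemma, the derived subgroup $G'$ is finitely generated, and then to recover the full claim by iterating this assertion along the derived series.

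For the reduction I would put $S_{1}=\{[x,y]:x,y\in S\}$. Since $S$ is normal and commutator-closed, $S_{1}$ is a normal commutator-closed subset of $S$, so every element of $S_{1}$ lies in $S$ and hence has $m$-th power a (left) Engel element. Moreover $\langle S_{1}\rangle$ is normal in $G$, and modulo $\langle S_{1}\rangle$ the images of the elements of $S$ commute pairwise, so $G/\langle S_{1}\rangle$ is abelian and $G'=\langle S_{1}\rangle$. Thus $G'$ is again generated by a normal commutator-closed set all of whose elements have Engel $m$-th power. Granting that every finitely generated group carrying such a generating set has finitely generated derived subgroup, one applies this first to $G$ to obtain that $G'$ is finitely generated, then to $G'$ equipped with the generating set $S_{1}$ to obtain that $G''$ is finitely generated, and so on; this yields the lemma.

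It then remains to prove that $G'$ is finitely generated when $G$ is. Since $G=\langle S\rangle$ is finitely generated, one may choose $x_{1},\dots,x_{k}\in S$ with $G=\langle x_{1},\dots,x_{k}\rangle$, and $G'$ is then the normal closure in $G$ of the finitely many commutators $c_{ij}=[x_{i},x_{j}]$. Each $c_{ij}$ lies in $S$, so $c_{ij}^{m}$ is a (left) Engel element; hence only finitely many Engel conditions are in play, namely $[x_{l},_{n}c_{ij}^{m}]=1$ for all $i,j,l$ and a single $n$, and in addition every element of $S$ --- in particular each $[c_{ij},x_{l}]$ --- again has Engel $m$-th power. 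The task is to extract from this data a finite generating set for the normal closure of the $c_{ij}$. This is the technical heart of \cite{BSTT}: one can, for instance, pass to the soluble quotients $G/G^{(t)}$, in which Gruenberg's description of the left Engel elements of a soluble group is available (they constitute the Hirsch--Plotkin radical), and then invoke Zelmanov's solution of the restricted Burnside problem to replace the a priori unbounded Engel conditions by bounded ones that hold uniformly in $t$; that uniformity is what pins down a finite generating set for $G'$.

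The step I expect to be the main obstacle is precisely this last one. The Engel conditions are not assumed bounded and $G$ need not be residually finite, so one cannot simply verify the relevant relations in finite quotients; the passage to a bounded, uniform situation rests on the Lie-theoretic content behind the restricted Burnside problem. By contrast, the reduction to $G'$, the identity $G'=\langle S_{1}\rangle$, and the propagation from one term of the derived series to the next are all routine.
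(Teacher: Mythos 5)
The paper itself gives no proof of this lemma: it is quoted verbatim as Corollary~5 of \cite{BSTT}, so your opening sentence already reproduces everything the paper does. Your reduction is also sound and worth stating: $S_{1}=\{[x,y]:x,y\in S\}$ is contained in $S$, is normal and commutator-closed, generates $G'$, and consists of elements whose $m$-th powers are Engel, so the lemma does follow by iterating the single assertion that a finitely generated group with such a generating set has finitely generated derived subgroup.

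That single assertion, however, is exactly where your proposal has a genuine gap, and the mechanism you sketch for it is not the right one. First, the claim that ``only finitely many Engel conditions are in play\dots{} and a single $n$'' is inconsistent with the hypotheses: the elements of $S$ are only assumed Engel, not $n$-Engel for a fixed $n$, and in any case the normal closure of the $c_{ij}$ involves conjugation by arbitrary elements of $G$, not just by the generators. Second, Zelmanov's solution of the restricted Burnside problem (and its consequences such as Theorem~\ref{BM}) requires a fixed Engel degree together with finiteness or residual finiteness; neither is available here, and indeed in \cite{BSTT} and in the present paper (Proposition~\ref{gamma}) that machinery is invoked only in the residually finite setting, not for this finite-generation statement. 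Third, verifying properties in the soluble quotients $G/G^{(t)}$ can at best yield information modulo $\bigcap_{t}G^{(t)}$, which does not give finite generation of $G'$ inside $G$ itself. The actual engine behind Corollary~5 of \cite{BSTT} is elementary: it is the normal-closure lemma reproduced here as Lemma~\ref{fg} (if $x^{m}$ is Engel and $H$ is finitely generated then $H^{\langle x\rangle}$ is finitely generated, which rests on the observation that $[h,{}_{n}a]=1$ forces $h^{a^{n}}$ and $h^{a^{-1}}$ into $\langle h,h^{a},\dots,h^{a^{n-1}}\rangle$), combined with the normality and commutator-closedness of $S$ to keep the successive generators inside $S$ and make the iterated closures terminate. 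Your sketch never brings these hypotheses to bear on the key step, which is the sign that the essential idea is missing.
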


Let $H$ and $X$ be subgroups of a group $G$. In the sequel, we denote by $H^X$ the smallest subgroup of $G$ containing $H$ and normalized by $X$.

\begin{lem}[see Corollary 2 of \cite{BSTT}]\label{fg}
Let $x$ be an element of a group $G$ and $H$ a finitely generated subgroup. If $x^m$ is Engel for some $m\geq 1$, then $H^{\langle x\rangle}$ is finitely generated.
\end{lem}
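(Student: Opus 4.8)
The plan is to prove this by elementary means; orderability plays no role here and the argument is purely about Engel elements, so I expect it to coincide with the cited Corollary~2 of \cite{BSTT}. I would first reduce to the case $m=1$ and then show that the a priori infinitely many conjugates $h^{x^i}$ of a generator $h$ of $H$ all lie inside a single finitely generated subgroup, this being forced by the fact that the Engel chain $[h,_i x]$ terminates.

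\emph{Reduction to $m=1$.} Assume the statement is known when $x$ itself is Engel. Each conjugate $H^{x^r}$ is finitely generated, since $H$ is. Writing an arbitrary integer as $i=qm+r$ with $0\le r<m$ gives $H^{x^i}=(H^{x^r})^{x^{qm}}$, so that
\[
H^{\langle x\rangle}=\langle\,(H^{x^r})^{\langle x^m\rangle}\mid 0\le r<m\,\rangle .
\]
Since $x^m$ is Engel, the case $m=1$ applied to the finitely generated subgroup $H^{x^r}$ and the element $x^m$ makes each $(H^{x^r})^{\langle x^m\rangle}$ finitely generated, and hence so is $H^{\langle x\rangle}$, being generated by finitely many of these.

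\emph{The case $m=1$.} Let $H=\langle h_1,\dots,h_k\rangle$, and for each $j$ fix $n_j$ with $[h_j,_{n_j}x]=1$. Set $c_i=[h_j,_i x]$ and $D_j=\langle c_0,\dots,c_{n_j-1}\rangle$, a finitely generated subgroup containing $h_j=c_0$. Each $c_i$ lies in $H^{\langle x\rangle}$, which contains $H$ and is $x$-invariant, so $D_j\le H^{\langle x\rangle}$; the same reasoning gives $D_j\le h_j^{\langle x\rangle}$. Conversely I would check that $D_j$ is itself normalized by $x$: from the identity $c_i^{\,x}=c_i c_{i+1}$ (together with $c_{n_j}=1$) one gets $c_i^{\,x}\in D_j$ at once, while $c_i^{\,x^{-1}}\in D_j$ follows by a downward induction on $i$, the base of the induction being the fact that $c_{n_j-1}$ centralizes $x$. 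Then $h_j^{\langle x\rangle}\le D_j$, so $D_j=h_j^{\langle x\rangle}$, and therefore
\[
H^{\langle x\rangle}=\langle h_1^{\langle x\rangle},\dots,h_k^{\langle x\rangle}\rangle=\langle D_1,\dots,D_k\rangle ,
\]
which is finitely generated.

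\emph{Expected difficulty.} There is no serious obstacle; the whole content is the observation that the Engel condition truncates the chain $[h,_i x]$, so that the normal closure of a single element under $\langle x\rangle$ — which naively requires infinitely many generators — is in fact generated by the finitely many iterated commutators $h,[h,x],\dots,[h,_{n-1}x]$. The only point that needs a little care is invariance under $x^{-1}$, where one must run the descent starting from the top of the now finite chain; everything else is routine bookkeeping with normal closures.
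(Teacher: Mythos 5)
Your argument is correct: the reduction to $m=1$ via $H^{\langle x\rangle}=\langle (H^{x^r})^{\langle x^m\rangle}\mid 0\le r<m\rangle$ is sound, and the identity $c_i^{\,x}=c_ic_{i+1}$ together with the downward induction for $c_i^{\,x^{-1}}$ does show that $h_j^{\langle x\rangle}=\langle h_j,[h_j,x],\dots,[h_j,{}_{n_j-1}\,x]\rangle$. The paper gives no proof of this lemma, only the citation to Corollary 2 of \cite{BSTT}, and your reconstruction is exactly the standard argument behind that result, so nothing further is needed.
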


\begin{lem}\label{cn}
Let $G$ be an ordered group generated by a set $X$. Suppose that for each $x\in X$ there exist positive integers $m,n$ such that $x^{m}$ is $n$-Engel. If $C$ is a convex subgroup of $G$, then $C$ is normal in $G$.
\end{lem}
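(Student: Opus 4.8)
Since $G=\l X\r$, a subgroup normalized by every element of $X$ is automatically normal in $G$, so it suffices to prove that $C^x=C$ for each $x\in X$. I would fix such an $x$ and choose positive integers $m,n$ with $x^m$ an $n$-Engel element. The plan is first to show that the \emph{genuinely} Engel element $g:=x^m$ normalizes $C$, and then to recover $C^x=C$ from $C^g=C$ using only that $G$ is ordered.

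For the first part I would argue by contradiction. Recall that every conjugate of a convex subgroup is convex and that the convex subgroups of an ordered group are totally ordered by inclusion; hence $C^g$ is convex and comparable with $C$, and I may assume $C^g\neq C$. Suppose first $C^g\subsetneq C$ and pick $c\in C\setminus C^g$. A straightforward induction on $k$ shows $[c,_k g]\in C\setminus C^g$ for all $k\geq 0$: indeed if $y\in C\setminus C^g$ then $[y,g]=y^{-1}y^g$ with $y^{-1}\in C\setminus C^g$ and $y^g\in C^g$, so $[y,g]\in C$, and $[y,g]\in C^g$ would force $y^{-1}=[y,g](y^g)^{-1}\in C^g$, a contradiction. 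In particular $[c,_k g]\neq 1$ for every $k$, contradicting that $g$ is $n$-Engel. Suppose instead $C\subsetneq C^g$ and pick $c\in C^g\setminus C$; since conjugation by $g$ carries the set $C^{g^i}\setminus C^{g^{i-1}}$ onto $C^{g^{i+1}}\setminus C^{g^i}$, a parallel induction gives $[c,_k g]\in C^{g^{k+1}}\setminus C^{g^k}$ for all $k$, so again $[c,_k g]\neq 1$ for every $k$, contradicting the Engel condition. Thus $C^g=C$.

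For the second part, note that $C^x$ is convex, hence comparable with $C$. If $C^x\subsetneq C$, then conjugating repeatedly by $x$ produces a strictly descending chain $C\supsetneq C^x\supsetneq\dots\supsetneq C^{x^m}=C^g=C$, which is absurd; the case $C^x\supsetneq C$ is excluded symmetrically. Hence $C^x=C$ for every $x\in X$, and therefore $C\trianglelefteq G$.

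The one point that needs care, and which I regard as the crux, is the asymmetry in the first part: one cannot simply pass from $g$ to $g^{-1}$ to reduce the case $C\subsetneq C^g$ to $C^g\subsetneq C$, since $g^{-1}=x^{-m}$ need not be Engel; so both the ``stationary'' commutator chain (staying inside $C\setminus C^g$) and the ``climbing'' one (ascending the tower $\{C^{g^i}\}$) have to be handled directly. Everything else is soft, using only the two standard facts about convex subgroups recalled above, together with the passage to the honest Engel power $x^m$ and the finiteness of the chain $C,C^x,\dots,C^{x^m}$.
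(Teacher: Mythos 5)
Your proof is correct, but it takes a genuinely different route from the paper's. The paper does not pass to the power $g=x^m$ at all: assuming $C<C^x$ and taking $c\in C$ with $c^x\notin C$, it invokes its Lemma \ref{fg} (Corollary 2 of \cite{BSTT}) to conclude that $\langle c\rangle^{\langle x\rangle}$ is finitely generated, hence contained in a single term $C^{x^{i_k}}$ of the conjugate tower; conjugating once more then forces $c^{x^{i_k+1}}\in C^{x^{i_k}}$, i.e.\ $c^x\in C$, a contradiction. Your argument instead extracts the honest Engel element $g=x^m$ and produces an explicit never-vanishing Engel chain: in the descending case $[c,_k g]$ stays trapped in $C\setminus C^g$, and in the ascending case it climbs the tower $C^{g^{k+1}}\setminus C^{g^k}$; both inductions check out, and your final step recovering $C^x=C$ from $C^{x^m}=C$ via the finite chain $C\supsetneq C^x\supsetneq\cdots\supsetneq C^{x^m}$ is sound, since conjugation preserves convexity and strict inclusion. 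You are also right that the two cases must be handled separately, as $g^{-1}$ need not be Engel. What each approach buys: yours is self-contained, avoiding the external finite-generation lemma entirely, and visibly uses only that $x^m$ is a \emph{left Engel} element (no uniform $n$ is needed — the contradiction is that $[c,_k g]\neq 1$ for every $k$); the paper's is shorter in context because Lemma \ref{fg} is already needed elsewhere in the argument, and it works directly with $x$ rather than detouring through $x^m$.
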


\begin{proof}
Suppose that $C$ is not normal in $G$. Since convex subgroups form a chain, we have either $C^x<C$ or $C<C^x$ for some $x\in X$. Without loss of generality, assume $C<C^x$ and let $c^x\in C^x\backslash C$ for a suitable $c\in C$. Then $C^{x^i}<C^{x^{i+1}}$ for any integer $i$. Moreover, by Lemma \ref{fg}, the subgroup $\l c\r^{\l x\r}$ is finitely generated, so that $\l c\r^{\l x\r}=\l c^{x^{i_1}},\ldots, c^{x^{i_k}}\r$ where $i_1,\ldots,i_k$ are integers. We may assume $i_1<i_2<\ldots<i_k$. It follows that $\l c\r^{\l x\r}\leq C^{x^{i_k}}$. Hence $c^{x^{i_{k}+1}}\in C^{x^{i_k}}$  and therefore $c^x\in C$, a contradiction.
\end{proof}

\begin{lem}[see Lemma 2.2 of \cite{STT}]\label{HK}
Let $G$ be a group generated by two finitely generated subgroups $H$ and $K$. Assume that $K=\l x_1,\ldots,x_d\r$, where each $x_i$ is Engel in $G$. If $K$ is nilpotent, then $H^G$ is finitely generated.
\end{lem}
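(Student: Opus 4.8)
The plan is to prove Lemma \ref{HK} by induction on the number $d$ of generators of $K$, using the fact that Engel elements that generate a nilpotent group are a tractable object. First I would treat the base case $d=0$, where $K=1$ and $H^G=H$ is finitely generated by hypothesis. For the inductive step, write $K=\langle K_0,x_d\rangle$ where $K_0=\langle x_1,\ldots,x_{d-1}\rangle$; since $K$ is nilpotent, so is $K_0$. I would like to introduce the intermediate subgroup $L=\langle H,K_0\rangle$ and the subgroup $M=H^{L}$; the idea is that $M$ is finitely generated by the inductive hypothesis applied to $L$ (noting $L=\langle H,K_0\rangle$ with $K_0$ nilpotent generated by $d-1$ Engel elements of $G$, hence of $L$), and then to recover $H^G$ by adjoining the action of $x_d$.

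The crux is then to pass from $H^{L}$ to $H^{G}=H^{\langle L,x_d\rangle}$. Here I would use Lemma \ref{fg}: since $x_d^m$ is Engel in $G$ for a suitable $m$ (indeed $x_d$ itself is Engel by hypothesis, so one may take $m=1$), the subgroup $M^{\langle x_d\rangle}$ is finitely generated whenever $M$ is finitely generated. Now $M^{\langle x_d\rangle}$ is generated by $H$ together with finitely many conjugates of the generators of $K_0$ by powers of $x_d$; the subtlety is that $M^{\langle x_d\rangle}$ need not yet be normal in $G$, since normalizing by $x_d$ does not obviously give a subgroup normalized by all of $L$. The standard device is to iterate: one builds an ascending chain by alternately applying $(\,\cdot\,)^{L}$ (which preserves finite generation by the inductive hypothesis, since at each stage we are forming $N^{\langle H,K_0\rangle}$ for a finitely generated $N\geq H$, and $\langle N,K_0\rangle$ is again generated by a finitely generated subgroup and a nilpotent group on $d-1$ Engel generators) and $(\,\cdot\,)^{\langle x_d\rangle}$ (which preserves finite generation by Lemma \ref{fg}), and argues that the chain stabilizes.

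The mechanism that forces stabilization is nilpotency of $K$: because $x_d$ commutes with $\gamma_{c+1}$-type iterated commutators, conjugation by $x_d$ moves the generators of $K_0$ only by elements lying deep in the lower central series of $K$, and those extra elements are already swallowed up after finitely many steps. Concretely, I expect to show that after finitely many alternations the subgroup obtained is normalized by both $L$ and $x_d$, hence by $G=\langle L,x_d\rangle$, and contains $H$, so it equals $H^G$; being the last term of a chain of finitely generated subgroups produced by finitely many applications of two finiteness-preserving operations, it is finitely generated.

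The main obstacle is exactly controlling this alternating process and proving it terminates: one must quantify, in terms of the nilpotency class $c$ of $K$, how the conjugates $x_i^{x_d^{\pm 1}}$ differ from $x_i$ by commutators of bounded weight, and then show that adjoining all such commutators requires only finitely many rounds before the subgroup becomes invariant under the full group. I would organize this by a secondary induction on $c$, reducing modulo the center-type term $\gamma_c(K)\cap(\text{relevant subgroup})$, where the action of $x_d$ becomes trivial on the top factor, so that one alternation suffices there and the inductive hypothesis on $c$ handles the rest. Handling the bookkeeping of which finitely many conjugates appear at each stage — and checking that the hypotheses of the inductive statement (a finitely generated $H$-part together with a nilpotent Engel-generated $K$-part) are genuinely reproduced at every stage — is where the real work lies.
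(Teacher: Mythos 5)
Your overall architecture (induct on $d$, peel off one Engel generator at a time, and use Lemma \ref{fg} to control closure under a single cyclic subgroup) is the right family of ideas, but the proof has a genuine gap exactly where you locate "the real work": the termination of the alternating process is asserted, not proved, and the mechanism you sketch for it does not function. Your plan is to run a secondary induction on the class $c$ of $K$ by "reducing modulo $\gamma_c(K)$", but $\gamma_c(K)$ is central only in $K$, not normal in $G$, so there is no quotient of $G$ to pass to; and the elements that actually obstruct stabilization are not "the generators of $K_0$ moved by $x_d$" but the conjugates of $H$ by elements of $\gamma_2(K)$. Concretely, if $M$ is normalized by $K_0$ and you form $M^{\langle x_d\rangle}=\langle M^{x_d^t}\rangle$, then for $y\in K_0$ one gets $(M^{x_d^t})^{y}=(M^{x_d^t})^{[x_d^t,y]}$, so the new conjugating elements lie in $\gamma_2(K)$ -- a subgroup whose generators are \emph{not} known to be Engel, so Lemma \ref{fg} gives you nothing for them, and your alternation has no a priori reason to close off after finitely many rounds. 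An ascending union of finitely generated subgroups equal to $H^G$ is automatic; proving it is finitely generated \emph{is} proving stabilization, so the argument as written is circular at its core.

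Two ingredients, both absent from your sketch, repair this. First, when $G=\langle H,K\rangle$ one has $H^G=\langle H^k : k\in K\rangle$: the right-hand side $M$ contains $H$ and is normalized by $K$, hence $MK$ is a subgroup containing $H$ and $K$, so $G=MK$, and then $H^{mk}=(H^m)^k\leq M$ for every $m\in M$, $k\in K$. This removes the need ever to re-normalize by $H$ (or by the intermediate $L=\langle H,K_0\rangle$), which is what forces you into the alternating chain. Second, the finiteness now comes from a structural fact about $K$ rather than from a limiting process: since $K$ is nilpotent of class $c$ and generated by $x_1,\dots,x_d$, the collection process shows $K=\langle x_{i_1}\rangle\langle x_{i_2}\rangle\cdots\langle x_{i_N}\rangle$ for a finite sequence $i_1,\dots,i_N$ (powers of basic commutators are handled via congruences such as $[x_i,x_j]^a\equiv [x_i^a,x_j] \bmod \gamma_3(K)$, which rewrites them as short words in the $\langle x_i\rangle$). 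Then $H^G=\bigl(\cdots\bigl(H^{\langle x_{i_1}\rangle}\bigr)^{\langle x_{i_2}\rangle}\cdots\bigr)^{\langle x_{i_N}\rangle}$ is finitely generated after exactly $N$ applications of Lemma \ref{fg}, with no convergence argument needed. Without something playing the role of these two steps, your proposal does not yet constitute a proof.
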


An important family of multilinear commutator words consists of the derived words $\delta_k$, on $2^k$ variables, which are defined by
\[
\delta_0=x_1 \text{ and }
\delta_k=[\delta_{k-1}(x_1,\ldots,x_{2^{k-1}}),\delta_{k-1}(x_{2^{k-1}+1},\ldots,x_{2^k})]
\text{ for $k\ge 1$.}
\]
The verbal subgroup corresponding to the word $\delta_k$ is the familiar $k$-th derived subgroup of $G$ usually denoted by $G^{(k)}$.

\begin{lem}[see Lemma 4.1 of \cite{shu3}]\label{H^y} Let $G$ be a group and $v$ be a multilinear commutator word. Then there exists $k\geq 1$ such that every $\delta_k$-value in $G$ is a $v$-value.
\end{lem}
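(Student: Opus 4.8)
The plan is to argue by induction on the complexity of the multilinear commutator word $v$ -- say, on the number $s$ of variables it involves, or equivalently on the number of bracket pairs -- after first isolating a monotonicity property of the derived words $\delta_j$. Recall that a multilinear commutator word on $s$ variables is either the word $x_1$ (when $s=1$) or has the form $v=[v_1,v_2]$, where $v_1$ and $v_2$ are multilinear commutator words on disjoint sets of $s_1$ and $s-s_1$ variables respectively, with $1\le s_1<s$. This recursive structure is exactly what the induction exploits.

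The first step is to establish the auxiliary claim: for every $j\ge 1$, every $\delta_j$-value in $G$ is also a $\delta_{j-1}$-value. I would prove this by induction on $j$. For $j=1$, a $\delta_1$-value $[a,b]$ is just an element of $G$, hence a $\delta_0$-value since $\delta_0=x_1$. For $j\ge 2$, a $\delta_j$-value has the form $[\delta_{j-1}(\bar a),\delta_{j-1}(\bar b)]$; by the inductive hypothesis one can write $\delta_{j-1}(\bar a)=\delta_{j-2}(\bar a')$ and $\delta_{j-1}(\bar b)=\delta_{j-2}(\bar b')$ for suitable tuples $\bar a',\bar b'$, so the original value equals $[\delta_{j-2}(\bar a'),\delta_{j-2}(\bar b')]=\delta_{j-1}(\bar a',\bar b')$, a $\delta_{j-1}$-value. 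Iterating, every $\delta_k$-value is a $\delta_j$-value whenever $k\ge j\ge 0$; that is, the sets of $\delta_j$-values form a decreasing chain as $j$ grows.

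Now for the lemma itself. If $v=x_1$, take $k=1$: every $\delta_1$-value is an element of $G$ and hence a $v$-value. Otherwise write $v=[v_1,v_2]$ as above; by the inductive hypothesis there are $k_1,k_2\ge 1$ such that every $\delta_{k_i}$-value is a $v_i$-value. Put $k=\max\{k_1,k_2\}+1$. A $\delta_k$-value is $[u,w]$ with $u$ and $w$ being $\delta_{k-1}$-values, and since $k-1\ge k_i$ the monotonicity claim shows $u$ is a $\delta_{k_1}$-value and $w$ is a $\delta_{k_2}$-value; hence $u$ is a $v_1$-value and $w$ a $v_2$-value. Substituting the corresponding tuples into the disjoint variable sets of $v_1$ and $v_2$ exhibits $[u,w]$ as a $v$-value, which completes the induction.

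The argument is essentially bookkeeping, so I do not expect a serious obstacle; the one point that genuinely needs care -- and that makes the naive induction close up -- is the monotonicity of the $\delta_j$-value sets, i.e. the fact that a $\delta_j$-value can be rewritten as a $\delta_{j'}$-value for any $j'\le j$. This is what allows one to reconcile the possibly different depths $k_1$ and $k_2$ demanded by the two halves $v_1$ and $v_2$ of $v$, by lifting both to depth $k-1$.
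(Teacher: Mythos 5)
Your argument is correct and is essentially the standard proof of this fact; the paper itself gives no proof but cites Lemma 4.1 of Shumyatsky's paper, where the same induction on the recursive structure $v=[v_1,v_2]$, combined with the observation that every $\delta_j$-value is a $\delta_{j'}$-value for $j'\le j$, is used. The one point you rightly flag as needing care -- reconciling the depths $k_1$ and $k_2$ via the monotonicity of the sets of $\delta_j$-values -- is handled correctly.
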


In any group $G$ there exists a unique maximal normal locally nilpotent subgroup $F(G)$ (called the Hirsch-Plotkin radical) containing all normal locally nilpotent subgroups of $G$ \cite[12.1.3]{Rob}. In general, $F(G)$ is a subset of the set $L(G)$ of all (left) Engel elements \cite[12.3.2]{Rob}. However, it coincides with $L(G)$ whenever $G$ is soluble (Gruenberg, \cite[12.3.3]{Rob}), or $G$ has an ascending series with locally nilpotent factors (Plotkin, \cite[Exercise 12.3.7]{Rob}).\\[10pt]
\indent The next result is an immediate consequence of Proposition 14 of \cite{BSTT}.

\begin{pro}\label{gamma}
Let $m,n$ be positive integers, $v$ a multilinear commutator word and $w=v^m$. Let $G$ be a residually finite group in which all $w$-values are $n$-Engel. Then the Hirsch-Plotkin radical $F(G)$ is precisely the set of Engel elements $L(G)$.
\end{pro}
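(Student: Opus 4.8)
The inclusion $F(G)\subseteq L(G)$ is standard \cite[12.3.2]{Rob}; the content of the statement is therefore the reverse inclusion $L(G)\subseteq F(G)$, and I would prove it one Engel element at a time. Fix $x\in L(G)$. Since $F(G)$ contains every normal locally nilpotent subgroup and $\langle x\rangle^{G}$ is the least normal subgroup containing $x$, it is enough to show that $\langle x\rangle^{G}$ is locally nilpotent, i.e.\ that each of its finitely generated subgroups is nilpotent. Every such subgroup is contained in some $H=\langle x^{g_{1}},\dots,x^{g_{t}}\rangle$ with $g_{1},\dots,g_{t}\in G$, and since a conjugate of a left Engel element is again a left Engel element, $H$ is a finitely generated residually finite group generated by $t$ left Engel elements.

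Next I would pass to the finite quotients of $H$. In a finite group the left Engel elements are exactly the elements of the Fitting subgroup (Baer), so in any finite quotient $\overline H$ the images of $x^{g_{1}},\dots,x^{g_{t}}$ all lie in $F(\overline H)$; hence $\overline H=F(\overline H)$ is nilpotent, and $H$ is residually (finite nilpotent). To conclude that $H$ itself is nilpotent I need the nilpotency classes of these finite quotients to be bounded by a function of $t$ (and of $m$, $n$ and $v$), uniformly in the quotient and in the chosen conjugates. Supplying precisely this uniform bound is, as I see it, the role of Proposition~14 of \cite{BSTT}: the $n$-Engel condition on the $w$-values of the ambient group $G$, together with residual finiteness, forces the relevant finite sections to be nilpotent of boundedly many steps. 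With the bound in hand, $H$ is residually (nilpotent of class $\le c(t,m,n,v)$), hence nilpotent of that class; therefore $\langle x\rangle^{G}$ is locally nilpotent and $x\in F(G)$.

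The whole of the argument except that uniform class bound is formal: the reduction to the normal closure, the reduction to a finitely generated $H$, the passage to finite quotients, and Baer's theorem. The bound is where residual finiteness is indispensable and where the proof cannot be elementary, since through Wilson's reduction it rests on Zelmanov's solution of the restricted Burnside problem --- which is exactly what Proposition~14 of \cite{BSTT} is built on. (One could instead hope to place $G$ in a class for which the identity $L=F$ is already known, such as radical groups in Plotkin's sense or soluble groups \cite[12.3.3]{Rob}, but recognising such membership looks to require the same deep ingredient, so I would not expect a shorter route.)
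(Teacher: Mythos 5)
Your formal reductions are all sound: $F(G)\subseteq L(G)$ is indeed \cite[12.3.2]{Rob}; $x\in F(G)$ if and only if $\langle x\rangle^{G}$ is locally nilpotent; every finitely generated subgroup of $\langle x\rangle^{G}$ sits inside some $H=\langle x^{g_1},\dots,x^{g_t}\rangle$; $H$ is residually finite; and Baer's theorem makes every finite quotient of $H$ nilpotent. The gap is the very last step, which is where all the mathematical content lives. A finitely generated group that is residually (finite nilpotent) need not be nilpotent --- free groups are residually finite-$p$ --- so you genuinely need the uniform bound on the nilpotency classes of the finite quotients of $H$ that you invoke. You do not prove this bound; you \emph{assign} it to Proposition~14 of \cite{BSTT} on the grounds that something must supply it. That attribution is almost certainly wrong, and worse, the bound itself is implausible in this generality: if the class of every finite $t$-generated section were bounded by a function of $t,m,n,v$, one would obtain nilpotency (not merely local nilpotency) of the relevant verbal subgroups by exactly your inverse-limit reasoning, whereas the authors explicitly describe the nilpotency of $v(G)$ as a complicated open problem and settle it only for $v=x$ via the special prime-selection trick in Theorem \ref{BM}. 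Note also that a finite nilpotent group generated by $t$ left Engel elements is just an arbitrary finite $t$-generated nilpotent group, so the Engel-generation hypothesis alone cannot bound the class; the $w$-Engel condition would have to do all the work, and extracting a class bound from it is precisely the kind of statement the known Zelmanov-based results (which give local nilpotency, or class bounds only modulo subgroups of bounded exponent, as in Theorem \ref{BM}) do not provide.

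For comparison, the paper offers no argument at all here: Proposition \ref{gamma} is stated as an immediate consequence of Proposition~14 of \cite{BSTT}, and the paragraph preceding it signals how that consequence is meant to be drawn --- via Plotkin's criterion that $L(G)=F(G)$ whenever $G$ has an ascending series with locally nilpotent factors \cite[Exercise 12.3.7]{Rob}, rather than via any quantitative bound on finite quotients. So the honest options are either to quote Proposition~14 of \cite{BSTT} accurately and deduce the statement from it (as the paper does), or to prove that $G$ possesses an ascending series with locally nilpotent factors and then apply Plotkin; your route through a uniform class bound is not available.
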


\begin{pro}\label{delta}
Let $k,m,n$ be positive integers and $w=\delta_k^m$. If $G$ is an ordered group in which all $w$-values are $n$-Engel, then $G^{(k)}$ is locally nilpotent.
\end{pro}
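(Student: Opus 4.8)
The plan is to prove the statement by induction on $k$, reducing in each step to a finitely generated situation where Zelmanov's theorem applies. Since $G^{(k)}$ is locally nilpotent if and only if every finitely generated subgroup of $G^{(k)}$ is nilpotent, and since any finitely generated subgroup of $G^{(k)}$ lies in $H^{(k)}$ for a suitable finitely generated $H \le G$, one may as well assume $G = \langle x_1,\dots,x_d\rangle$ is finitely generated; the generating set $X = \{x_1,\dots,x_d\}$ is then available for applying Lemma \ref{cn}. Fix an order on $G$; I will exploit the chain of convex subgroups and the fact (from Lemma \ref{cn}) that each of them is normal in $G$.

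First I would treat the base case $k=0$, i.e. $w = \delta_0^m = x^m$ and all $m$-th powers are $n$-Engel; here $G^{(0)} = G$ and the claim is that $G$ itself is locally nilpotent. For this, consider the convex subgroups of the ordered group $G$. Each convex jump $(C,D)$ has abelian quotient $D/C$ by H\"older's theorem (as recalled after Lemma \ref{Zi}), and each $C$ is normal in $G$ by Lemma \ref{cn}, so $G$ has an ascending (transfinite) normal series with abelian factors, hence is locally soluble. Now a finitely generated subgroup of $G$ is soluble, and being a subgroup of an orderable group it is orderable, hence residually finite (Mal'cev: finitely generated soluble orderable — or more simply finitely generated torsion-free soluble of the relevant type — is residually finite), so Proposition \ref{gamma} identifies $F = L$ in it; since the $m$-th powers are $n$-Engel and every element of a finitely generated soluble group lies in an appropriate structure, a Zelmanov-type argument (exactly the one behind Lemma \ref{G^i} and the cited \cite{BSTT} results, via the restricted Burnside problem) forces local nilpotency. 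Concretely I expect to invoke the already-quoted consequence that in a finitely generated soluble group where all $x^m$ are Engel the derived series stabilizes finitely (Lemma \ref{G^i}), and then climb the (now finite) derived series using that Engel elements generating a nilpotent normal-type subgroup behave well (Lemma \ref{HK}), concluding nilpotency of each finitely generated subgroup.

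For the inductive step, suppose the result holds for $k-1$ and let $w = \delta_k^m$ with all $w$-values $n$-Engel. Note $\delta_k = [\delta_{k-1}, \delta_{k-1}]$, so $\delta_k$-values are commutators of $\delta_{k-1}$-values; I want to bring the hypothesis to bear on $G^{(k-1)}$. The key reduction is: $G^{(k)} = (G^{(k-1)})'$, and every $w$-value $\delta_k(\dots)^m$ is an $m$-th power of a commutator of $\delta_{k-1}$-values. I would pass to $G/Z$ where $Z$ is an appropriate relatively convex term making quotients orderable (using that $Z_i(G)$ is relatively convex), or better, argue directly inside $G$: by Lemma \ref{G^i} applied to the commutator-closed generating set consisting of $\delta_{k-1}$-values (which is normal and commutator-closed, and whose $m$-th powers are Engel since $\delta_k$-values among them are $n$-Engel — here one uses that $[u,v]$ for $u,v$ $\delta_{k-1}$-values is a $\delta_k$-value, so within $G^{(k-1)}$ the relevant powers are Engel), the derived series of a finitely generated piece of $G^{(k-1)}$ is finitely generated, and then $G^{(k)}$ is locally soluble; combining with residual finiteness of its finitely generated subgroups and Proposition \ref{gamma}, one deduces that the $\delta_{k-1}$-values of sufficiently high power, being Engel, lie in the Hirsch-Plotkin radical, which together with Zelmanov's theorem gives that $G^{(k)}$ is locally nilpotent.

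The main obstacle is the passage from the Engel condition on $w$-values to genuine local nilpotency of the verbal subgroup: the raw hypothesis only controls $m$-th powers of $\delta_k$-values, not all elements of $G^{(k)}$, so one cannot directly invoke the Kim--Rhemtulla theorem on orderable $n$-Engel groups. The bridge is (i) using orderability to get, via convex jumps and Lemma \ref{cn}, a normal series with abelian factors and hence local solubility of the relevant subgroup, (ii) using Lemma \ref{G^i} to make the derived series finitely generated so that finitely generated subgroups of $G^{(k)}$ are soluble and residually finite, (iii) invoking Proposition \ref{gamma} to put the Engel $w$-values into the Hirsch--Plotkin radical, and (iv) applying Zelmanov's solution of the restricted Burnside problem (as packaged in the cited results) to upgrade "every element is Engel modulo the radical" to nilpotency of each finitely generated subgroup. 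Getting these four ingredients to interlock cleanly — in particular verifying that the set of $\delta_{k-1}$-values is a normal commutator-closed generating set of $G^{(k-1)}$ on which the power-Engel hypothesis is exactly what Lemma \ref{G^i} needs — is where the real work lies; the rest is bookkeeping with convex subgroups.
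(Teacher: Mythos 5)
Your proposal assembles the right inventory of tools (Lemmas \ref{cn}, \ref{G^i}, \ref{HK}, Proposition \ref{gamma}, Lemma \ref{Zi}), but the way you interlock them has two genuine gaps. First, the induction is on the wrong parameter. The hypothesis concerns $m$-th powers of $\delta_k$-values only; it says nothing about $m$-th powers of $\delta_{k-1}$-values, so the case $k-1$ of the statement is simply not available as an inductive hypothesis. Your own inductive step already shows the strain: you propose to apply Lemma \ref{G^i} to the normal commutator-closed set of $\delta_{k-1}$-values, but that lemma requires $x^m$ to be Engel for \emph{every} $x$ in the generating set, which you only know for the $\delta_k$-values among them. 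The paper instead fixes $k$, takes a finitely generated $V\le G^{(k)}$ inside $\langle v_1,\dots,v_d\rangle$ for $\delta_k$-values $v_i$, sets $w_i=v_i^m$, and inducts on $d$, working with $H=\langle w_1,\dots,w_{d-1},v_d\rangle$ and $N=\langle v_d\rangle^H$; Lemma \ref{G^i} is applied to $S\cap N$, where $S$ is the set of $\delta_k$-values, for which the Engel hypothesis genuinely holds, and Lemma \ref{Zi} transfers nilpotency from $W=\langle w_1,\dots,w_d\rangle$ back to $\langle v_1,\dots,v_d\rangle$.

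Second, your route to solubility and residual finiteness does not work as stated. The convex subgroups of an ordered group form a complete chain that need not be well-ordered, so the jumps do not assemble into an \emph{ascending} normal series; and even granting such a series with abelian factors, the inference ``hence locally soluble'' is a substantive theorem (Mal'cev, for $SN^*$-groups) that is not among the quoted tools, while for general complete systems it fails. Likewise ``finitely generated soluble orderable implies residually finite'' is not available. The paper's substitute is a finiteness chain: the Engel generators force every soluble quotient of $H$ to be polycyclic (nilpotent-by-finite cyclic), hence residually finite; Lemma \ref{G^i} makes every $H^{(i)}$ finitely generated; one then sets $R$ equal to the intersection of all convex subgroups of $H$ with soluble quotient, shows $H/R$ is residually soluble hence residually finite so that Proposition \ref{gamma} applies there, and rules out $R\neq 1$ by showing $R$ would be finitely generated (it contains some finitely generated $H^{(i)}$ with polycyclic quotient), whence Zorn's Lemma yields a maximal convex $D<R$; the abelian jump $R/D$ makes $H/D$ soluble, forcing $R\le D$, a contradiction. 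Without this $R$-argument your steps (i)--(iv) do not interlock.
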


\begin{proof} Denote by $S$ the set of all $\delta_k$-values in $G$ and choose a finitely generated subgroup $V$ of $G^{(k)}$. Clearly, there exist finitely many $\delta_k$-values $v_1,\dots,v_d$ such that $V\leq\langle v_1,\dots,v_{d}\rangle$. Set $W=\langle w_1,\dots,w_{d}\rangle$, where $w_i=v_i^m$ for any $i\in\{1,\ldots,d\}$. By Lemma \ref{Zi}, the result will be proved once it is shown that $W$ is nilpotent. If $d=1$, then $W$ is cyclic. Thus, we assume that $d\geq 2$ and use induction on $d$.

Let $H=\l w_1,\dots,w_{d-1}, v_d\r$. Suppose that $Q$ is a soluble quotient of $H$. Since $w_1,\dots,w_{d}$ are Engel elements, their images in $Q$ must lie in the Hirsch-Plotkin radical $F(Q)$. It follows that $Q/F(Q)$ is cyclic of order at most $m$. Therefore $F(Q)$ has finite index and hence it is finitely generated. We now conclude that $F(Q)$ is nilpotent. Thus, every soluble quotient of $H$ is polycyclic because it is an extension of a nilpotent group by a cyclic group of order dividing $m$. Set $N=\l v_d\r^H$. By the induction hypothesis the subgroup $\l w_1,\dots,w_{d-1}\r$ is nilpotent and so $H/N$ is nilpotent. On the other hand, by Lemma \ref{HK}, $N$ is finitely generated. More precisely, $N$ is generated by finitely many $\delta_k$-values. Furthermore, $S\cap N$ is a normal commutator-closed subset of $N$. Thus, by Lemma \ref{G^i}, $N^{(i)}$ is finitely generated for every $i$. As a consequence, we obtain that $H^{(i)}$ is also finitely generated for every $i$. Indeed, $H/N^{(i)}$ is a soluble group and therefore it is polycyclic. Hence, $H^{(i)}/N^{(i)}$ is finitely generated and so is $H^{(i)}$.

Let $R$ be the intersection of all convex subgroups $C$ of $H$ such that $H/C$ is soluble. Notice that $R$ is convex and, by Lemma \ref{cn},  any convex subgroup in $H$ is normal. Therefore $R$ is normal in $H$ and $H/R$ is residually soluble. Since every soluble quotient of $H$ is polycyclic and every polycyclic group is residually finite, we deduce that $H/R$ is residually finite. By Proposition \ref{gamma} all Engel elements of $H/R$ belong to the Hirsch-Plotkin radical $F(H/R)$. Therefore $H/R$ is an extension of a nilpotent group by a cyclic group of finite order. In particular $H/R$ is soluble and $WR/R$ is nilpotent. If $R=1$, we are done. Assume $R\neq 1$. Then $R$ contains some $H^{(i)}$, which is finitely generated by the previous argument. Taking into account that $H/H^{(i)}$ is polycyclic, we conclude that $R/H^{(i)}$ is also finitely generated. Hence $R$ is finitely generated and so, by Zorn's Lemma, there exists a maximal convex subgroup $D$ of $H$ contained properly in $R$. We already know that $D$ is normal in $H$. In addition, $(D,R)$ is a jump in the set of all convex subgroups of $H$, hence $R/D$ is an abelian group \cite[Lemma 1.3.6 $(ii)$]{BMR}. It follows that $H/D$ is soluble and $R\leq D$. This is a contradiction since $D<R$.
\end{proof}

\begin{proof}[\bf Proof of Theorem \ref{main}]
By Lemma \ref{H^y}, there exists $k\geq 1$ such that every $\delta_k$-value is a $v$-value. Choose an order on $G$. Therefore, by Proposition \ref{delta}, the subgroup $G^{(k)}$ is locally nilpotent.  Since  $w(G)G^{(k)}/G^{(k)}$ is soluble and generated by Engel elements, it is locally nilpotent. Thus $w(G)$ is an extension of a locally nilpotent group, $w(G)\cap G^{(k)}$, by a locally nilpotent group. It follows that $w(G)$ is also locally nilpotent. Finally, applying Lemma \ref{Zi}, we conclude that $v(G)$ is locally nilpotent.
\end{proof}

As usual, we denote by $\gamma_i(G)$ the $i$-th term of the lower central series of a group $G$. We will  require the following result, due to Burns and Medvedev \cite{BM}.

\begin{thm}\label{BM}
Let $n\geq 1$. There exist constants $c=c(n)$ and $e=e(n)$ depending only on $n$ such that, if $G$ is a finite $n$-Engel group, then the exponent of $\gamma_c(G)$ divides $e$.
\end{thm}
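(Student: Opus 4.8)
The plan is to reduce the statement to finite $p$-groups and then treat ``generic'' (large) and ``exceptional'' (small) primes by completely different means, the deep input in both cases coming from Zelmanov.

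\medskip
\noindent\textbf{Step 1: reduction to $p$-groups.} Since every Engel element of a finite group lies in the Fitting subgroup (Baer), a finite $n$-Engel group $G$ equals its Fitting subgroup, hence is nilpotent; thus $G=P_1\times\cdots\times P_r$ is the direct product of its Sylow subgroups and $\gamma_c(G)=\gamma_c(P_1)\times\cdots\times\gamma_c(P_r)$ for every $c$. So it suffices to find an $n$-bounded integer $c$ with the following two properties: for all but $n$-boundedly many primes $p$ one has $\gamma_c(P)=1$ for every finite $n$-Engel $p$-group $P$; and for each of the remaining primes $p$ there is a bound, depending only on $n$ and $p$, on the exponent of $\gamma_c(P)$. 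One then takes $e=e(n)$ to be the product of these finitely many prime-power bounds.

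\medskip
\noindent\textbf{Step 2: generic primes.} Fix a finite $n$-Engel $p$-group $P$ and pass to the Lie ring $L=L(P)$ associated with its lower central series, generated over $\mathbb{Z}$ by its degree-one component. Expanding the group identity $[x,\,_n y]=1$ into Lie commutators shows that $L$ satisfies an Engel identity of degree bounded in terms of $n$, modulo correction terms whose integer coefficients have only $n$-bounded prime divisors; consequently, once $p$ exceeds an $n$-bounded threshold, $L\otimes\mathbb{F}_p$ is a genuine $m$-Engel Lie algebra over $\mathbb{F}_p$ with $m=m(n)<p$. By Zelmanov's theorem on the nilpotence of Engel Lie algebras, such an algebra is nilpotent of class bounded by a function of $n$ only, \emph{uniformly in the number of generators}. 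Translating back, $P$ is nilpotent of class at most some $c_0(n)$, so $\gamma_{c_0(n)}(P)=1$; this settles all primes above the threshold.

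\medskip
\noindent\textbf{Step 3: exceptional primes.} For each of the finitely many remaining small primes $p$ the Lie method fails, and one instead invokes Zelmanov's solution of the restricted Burnside problem \cite{ze1,ze2}. The goal is to show that a finite $n$-Engel $p$-group $P$ has a normal subgroup $N$ of $(n,p)$-bounded exponent with $P/N$ nilpotent of $n$-bounded class: one passes to appropriate finite quotients of $P$ of bounded exponent, uses the restricted Burnside problem to bound their nilpotency class independently of the rank, and then tracks how the exponent of the relevant power-commutator subgroups propagates down the lower central series. Taking $c$ at least this class bound gives $\gamma_c(P)\le N$, so $\gamma_c(P)$ has $(n,p)$-bounded exponent, completing Step 1.

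\medskip
\noindent\textbf{Main obstacle.} The crux is uniformity in the number of generators. A soft compactness argument (from Wilson's theorem together with Zelmanov) only bounds the class of $d$-generator finite $n$-Engel groups in terms of $n$ and $d$, which is useless since $p$-groups have unbounded rank; what is genuinely needed is the generator-free nilpotency bound for Engel Lie algebras in characteristic $0$ and large characteristic (Step 2) and the restricted Burnside problem in small characteristic (Step 3). A secondary technical nuisance is the exact bookkeeping in Step 2 of which primes can divide the coefficients in the Lie expansion of the Engel law, since this pins down the exceptional set of primes and hence the precise value of $e(n)$.
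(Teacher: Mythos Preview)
The paper does not prove this theorem at all: it is quoted verbatim as a result ``due to Burns and Medvedev \cite{BM}'' and then used as a black box in the proof of Theorem~\ref{main2}. There is therefore no in-paper proof to compare your attempt against.

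That said, your sketch is broadly the strategy of the original Burns--Medvedev argument: reduce to $p$-groups via Baer, handle large primes by passing to the associated Lie ring and invoking Zelmanov's global nilpotency of $n$-Engel Lie algebras (class bounded in terms of $n$ alone, once the characteristic exceeds an $n$-bounded threshold), and treat the finitely many small primes separately using the restricted Burnside problem. One point to tighten is Step~3: the restricted Burnside problem as usually stated bounds the order of a $d$-generator group of exponent $e$ in terms of $d$ and $e$, which by itself does not give a rank-free class bound. What Burns and Medvedev actually extract (ultimately from Zelmanov's Lie-theoretic machinery behind RBP) is, for each fixed small prime $p$, an $(n,p)$-bounded $c_p$ and $e_p$ such that $\gamma_{c_p}(P)$ has exponent dividing $p^{e_p}$ for every finite $n$-Engel $p$-group $P$, independently of the number of generators; your phrase ``tracks how the exponent \dots\ propagates down the lower central series'' gestures at this but does not supply the mechanism. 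If you want a self-contained write-up, this is the step that needs a genuine argument rather than a citation of RBP alone.
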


The proof of  Theorem \ref{main2} will now be pretty short.

\begin{proof}[\bf Proof of Theorem \ref{main2}]  According to Theorem \ref{main} $G$ is locally nilpotent.
Let $H$ be an arbitrary finitely generated subgroup of $G$. Thus, $H$ is nilpotent. Let $c$ and $e$ be as in Theorem \ref{BM}. Let $p$ be a prime which does not divide the product $e\cdot m$ and let $P$ be any finite quotient of $H$ having a $p$-power order. Since $p$ does not divide $m$, it follows that every element of $P$ can be written in the form $x^m$ for a suitable element $x\in P$. Thus, $P$ is $n$-Engel and therefore the exponent of $\gamma_c(P)$ divides $e$. Using the fact that $p$ does not divide $e$, we conclude that $\gamma_c(P)=1$. Since $H$ is a finitely generated torsion-free nilpotent group, $H$ is a residually finite $p$-group \cite[Gruenberg, 5.2.21]{Rob} and so $\gamma_c(H)=1$. This holds for any finitely generated subgroup $H$ of $G$ and of course this implies that $\gamma_c(G)=1$. Hence $G$ is nilpotent of class at most $c-1$.
\end{proof}

\end{document}